\providecommand{\U}[1]{\protect \rule{.1in}{.1in}}
\theoremstyle{plain}
\newtheorem{lemma}{Lemma}
\newtheorem{proposition}{Proposition}
\newtheorem{theorem}{Theorem}
\numberwithin{equation}{section}
\begin{document}
\title[Beurling Theorem]{Lebesgue and Hardy Spaces for Symmetric Norms II: \\A Vector-Valued Beurling Theorem}
\author{Yanni Chen}
\address{Department of Mathematics, University of New Hampshire, Durham, NH 03824, U.S.A.}
\email{yet2@wildcats.unh.edu}
\author{Don Hadwin}
\curraddr{Department of Mathematics, University of New Hampshire, Durham, NH 03824, U.S.A.}
\email{don@unh.edu}
\urladdr{http://euclid.unh.edu/\symbol{126}don}
\author{Ye Zhang}
\address{Department of Mathematics, University of New Hampshire, Durham, NH 03824, U.S.A.}
\email{yjg2@unh.edu}
\thanks{This work supported in part by a UNH Dissertation Fellowship, $~$the Eric
Nordgren Research Fellowship, and a grant from the Simons Foundation.}
\subjclass[2000]{Primary 47A15, 30H10; Secondary 46E15}
\keywords{Hardy space, rotationally symmetric norm, Beurling's theorem, invariant
subspace, measurable cross-section}

\begin{abstract}
Suppose $\alpha$ is a rotationally symmetric norm on $L^{\infty}\left(
\mathbb{T}\right)  $ and $\beta$ is a "nice" norm on $L^{\infty}\left(
\Omega,\mu \right)  $ where $\mu$ is a $\sigma$-finite measure on $\Omega$. We
prove a version of Beurling's invariant subspace theorem for the space
$L^{\beta}\left(  \mu,H^{\alpha}\right)  .$ Our proof uses the version of
Beurling's theorem on $H^{\alpha}\left(  \mathbb{T}\right)  $ in \cite{Chen}
and measurable cross-section techniques. Our result significantly extends a
result of H. Rezaei, S. Talebzadeh, and D. Y. Shin \cite{RTS}.

\end{abstract}
\maketitle

\section{Introduction}

Among the classical results that exemplify strong links between complex
analysis and operator theory, one of the most prominent places is occupied by
the description of all shift-invariant subspaces in the Hardy spaces and its
numerous generalizations (see \cite{ARS}, \cite{PRH}, \cite{Sai} and
\cite{YH}). The original statement concerning the space $H^{2}$ of functions
on the unit disk $\mathbb{D}$ was proved by A. Beurling \cite{Arv},
\cite{Helson}, and was later extended to $H^{p}$ classes by T. P. Srinivasan
\cite{Sr}. Further generalizations covering the vector-valued Hardy spaces
(attributed to P. Lax, H. Helson, D. Lowdenslager, P. R. Halmos, J. Rovnyak
and L. de Branges, but usually referred to as the Halmos-Beurling-Lax Theorem)
were used to obtain a functional model for a class of subnormal operators. In
\cite{Chen}, the first author extended the $H^{p}$ result by replacing the
$p$-norms with continuous rotationally symmetric norms $\alpha$ on $L^{\infty
}\left(  m\right)  $, where $m$ is Haar measure on the unit circle
$\mathbb{T}$, and defining $H^{\alpha}$ to be the $\alpha$-completion of the
set of polynomials. Recently, H. Rezaei, S. Talebzadeh, D. Y. Shin \cite{RTS}
described certain shift-invariant subspaces of $H^{2}\left(  \mathbb{D}%
,\mathcal{H}\right)  $ where $\mathcal{H}$ is a separable Hilbert space and
$\mathbb{D}$ is the open unit disk in the complex plane $\mathbb{C}$. In this
paper, we prove a very general version of Beurling's theorem that includes the
results in \cite{RTS} and \cite[Theorem 7.8]{Chen} as very special cases. A
key ingredient is the theory of measurable cross-sections \cite{Arv}. \bigskip

\section{Preliminaries}

For $1\leq p<\infty,$ the Hardy space $H^{p}:=H^{p}(\mathbb{D})$ is the space
of all holomorphic functions $f:\mathbb{D}\rightarrow \mathbb{C}$ for which
\[
\Vert f\Vert_{H^{p}}:=\lim_{r\rightarrow1}(\frac{1}{2\pi}\int_{0}^{2\pi
}|f(re^{i\theta})|^{p}d\theta)^{\frac{1}{p}}<\infty.
\]
An \emph{inner function} $\phi \in H^{2}$ is a bounded analytic function on
$\mathbb{D}$ with non-tangential boundary values of modulus $1$ a.e. $m$,
where $m$ is normalized arc-length measure on the unit circle $\mathbb{T}$.

Suppose $\left(  \Omega,\mu \right)  $ is a $\sigma$-finite measure space such
that $L^{1}\left(  \mu \right)  $ is separable. Let $L_{0}^{\infty}\left(
\mu \right)  $ denote the set of (equivalence classes of) bounded measurable
functions $f:\Omega \rightarrow \mathbb{C}$ such that $\mu \left(  f^{-1}\left(
\mathbb{C}\backslash \left \{  0\right \}  \right)  \right)  <\infty$, and let
$\beta$ be a norm on $L_{0}^{\infty}\left(  \mu \right)  $ such that

\begin{enumerate}
\item $\beta \left(  f\right)  =\beta \left(  \left \vert f\right \vert \right)
,$

\item $\lim_{\mu \left(  E\right)  \rightarrow0}\beta \left(  \chi_{E}\right)
=0$,

\item $\beta \left(  f_{n}\right)  \rightarrow0$ implies $\chi_{E}%
f_{n}\rightarrow0$ in measure for each $E$ with $\mu \left(  E\right)  <\infty$.
\end{enumerate}

Examples of such norms are the norms $\left \Vert \cdot \right \Vert _{p}$ when
$1\leq p<\infty$. We define $L^{\beta}\left(  \mu \right)  $ to be the
completion of $L_{0}^{\infty}\left(  \mu \right)  $ with respect to $\beta$. At
this point we do not know that the elements of $L^{\beta}\left(  \mu \right)  $
can be represented as measurable functions. This follows from part $\left(
5\right)  $ of the following lemma, which also includes some basic facts about
such norms $\beta$.

\begin{lemma}
\label{1}The following statements are true for $\left(  \Omega,\mu \right)  $
and $\beta$ as above.

\begin{enumerate}
\item If $\left \vert f\right \vert \leq \left \vert g\right \vert $, then
$\beta \left(  f\right)  \leq \beta \left(  g\right)  $ whenever $f,g\in
L_{0}^{\infty}\left(  \mu \right)  ;$

\item $\beta \left(  wf\right)  \leq \left \Vert w\right \Vert _{\infty}%
\beta \left(  f\right)  $ whenever $f\in L_{0}^{\infty}\left(  \mu \right)  $
and $w\in L^{\infty}\left(  \mu \right)  ;$

\item The multiplication $wf=fw$ can be extended from part $\left(  2\right)
$ to the case where $f\in L^{\beta}\left(  \mu \right)  $ and $w\in L^{\infty
}\left(  \mu \right)  $, so that $\beta \left(  wf\right)  \leq \left \Vert
w\right \Vert _{\infty}\beta \left(  f\right)  $ still holds, i.e., $L^{\beta
}\left(  \mu \right)  $ is an $L^{\infty}\left(  \mu \right)  $-bimodule$;$

\item If $\left \{  E_{n}\right \}  $ is a sequence of measurable sets such that
$\mu \left(  E_{n}\cap F\right)  \rightarrow0$ for every $F$ with $\mu \left(
F\right)  <\infty$, then $\beta \left(  \chi_{E_{n}}f\right)  \rightarrow0$ for
every $f\in L^{\beta}\left(  \mu \right)  $;

\item If $\left \{  f_{n}\right \}  $ is a $\beta$-Cauchy sequence in
$L_{0}^{\infty}\left(  \mu \right)  $ and $\chi_{E}f_{n}\rightarrow0$ in
measure for each $E$ with $\mu \left(  E\right)  <\infty$, then $\beta \left(
f_{n}\right)  \rightarrow0;$

\item If $\left \vert f\right \vert \leq \left \vert g\right \vert $ and $g\in
L^{\beta}\left(  \mu \right)  $, then $f\in L^{\beta}\left(  \mu \right)  $ and
$\beta \left(  f\right)  \leq \beta \left(  g\right)  ;$

\item If $h\in L^{\beta}\left(  \mu \right)  ,$ $\left \{  f_{n}\right \}  $ is a
sequence, $\left \vert f_{n}\right \vert \leq h$ for $n\geq1$ and $\chi
_{E}\left \vert f_{n}-f\right \vert \rightarrow0$ in measure for every
$E\subset \Omega$ with $\mu \left(  E\right)  <\infty$, then $f\in L^{\beta
}\left(  \mu \right)  $ and $\beta \left(  f_{n}-f\right)  \rightarrow0;$

\item $L^{\beta}\left(  \mu \right)  $ is a separable Banach space.
\end{enumerate}
\end{lemma}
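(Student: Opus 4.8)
The plan is to build the eight statements more or less in the listed order, the engine being two elementary observations about \emph{absolute} norms. First, an absolute norm on a finite-dimensional space --- concretely, on the span of finitely many indicators of disjoint measurable sets --- is automatically monotone, because the restriction of the norm to a coordinate line is an even (by absoluteness) convex function of one real variable, hence nondecreasing on $[0,\infty)$; this gives all the monotonicity and $\left\Vert\cdot\right\Vert_{\infty}$-type estimates immediately for \emph{simple} functions. Second, for \emph{any} $h\in L_0^{\infty}(\mu)$ and any measurable set $G$ the map $t\mapsto\beta(\chi_{G^{c}}h+t\chi_{G}h)$ on $\mathbb{R}$ is even --- since $\chi_{G^{c}}h$ and $\chi_{G}h$ have disjoint supports, $\left\vert\chi_{G^{c}}h-t\chi_{G}h\right\vert=\left\vert\chi_{G^{c}}h+t\chi_{G}h\right\vert$ pointwise --- and convex, so $\beta(\chi_{G}h)\leq\beta(h)$ for every measurable $G$ and every $h$; iterating, $\beta(wh)\leq\left\Vert w\right\Vert_{\infty}\beta(h)$ for every simple $w$. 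The real content of the lemma is to push monotonicity and the estimate $\beta(h)\leq\left\Vert h\right\Vert_{\infty}\beta(\chi_{F})$ (clear when $h$ is simple and $\operatorname{supp}h\subseteq F$, $\mu(F)<\infty$) from simple functions to arbitrary functions, and then to the completion; this is exactly where property $(2)$ of $\beta$ and, crucially, part $(5)$ enter.

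For part $(1)$, reduce to $0\leq f\leq g$ using $\beta(f)=\beta(\left\vert f\right\vert)$; handle simple $f,g$ by the finite-dimensional fact; and approximate general $f,g$ by simple functions from below and above, the layer-cake representation $f=\int_{0}^{1}\chi_{\{f>t\}}\,dt$ (for $0\leq f\leq 1$) being the clean way to organize this, with the errors estimated via $\beta(\chi_{G})\leq\beta(\chi_{F})$ for $G\subseteq F$ and property $(2)$. The point one cannot skip --- and which is isolated in part $(5)$ --- is that a $\beta$-Cauchy sequence in $L_0^{\infty}(\mu)$ that converges to $0$ in measure on every set of finite measure must converge to $0$ in $\beta$; this is what identifies the $\beta$-limit of the simple approximants of $f$ with $f$ itself, and what lets elements of $L^{\beta}(\mu)$ be realized as measurable functions. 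Granting $(1)$, part $(2)$ is immediate from $\left\vert wf\right\vert\leq\left\Vert w\right\Vert_{\infty}\left\vert f\right\vert$, and $(3)$ follows by extending the contractive map $f\mapsto wf$ from the dense subspace $L_0^{\infty}(\mu)$ to all of $L^{\beta}(\mu)$.

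Part $(5)$ I expect to be the main obstacle. The plan: pass to a subsequence with $\beta(f_{n+1}-f_{n})<2^{-n}$; fix an exhaustion $\Omega=\bigcup_{k}E_{k}$ by finite-measure sets and, using convergence in measure together with a Borel--Cantelli argument, pass to a further subsequence converging to $0$ almost everywhere. Then, writing $F_{N}:=\operatorname{supp}f_{N}$, decompose $f_{n}=\chi_{F_{N}}f_{n}+\chi_{F_{N}^{c}}f_{n}$: the tail term satisfies $\beta(\chi_{F_{N}^{c}}f_{n})=\beta(\chi_{F_{N}^{c}}(f_{n}-f_{N}))\leq\beta(f_{n}-f_{N})$, small for $n\geq N$, so each $f_{n}$ is, up to arbitrarily small $\beta$-error, supported in the \emph{fixed} finite-measure set $F_{N}$. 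Splitting $\chi_{F_{N}}f_{n}$ at a small level $\theta$, the part where $\left\vert f_{n}\right\vert>\theta$ is supported on a subset of $F_{N}$ of measure tending to $0$ (convergence in measure on the fixed set $F_{N}$) and has small $\beta$-norm by property $(2)$, while the complementary part has sup-norm at most $\theta$ on $F_{N}$; letting $\theta\to 0$ and then $n\to\infty$ forces $\beta(f_{n})\to 0$. The genuine difficulty is the bookkeeping in this last step, since the supports $\operatorname{supp}f_{n}$ need not lie in a common finite-measure set, so the reduction to the fixed set $F_{N}$ via $\beta$-Cauchyness is essential; and since this step needs the simple-to-general estimates while those estimates need $(5)$, one must run the arguments for $(1)$ and $(5)$ together as a single bootstrap. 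Part $(4)$ comes from the same estimates: $\beta(\chi_{E_{n}}f)\leq\left\Vert f\right\Vert_{\infty}\beta(\chi_{E_{n}\cap\operatorname{supp}f})\to 0$ for $f\in L_0^{\infty}(\mu)$ by property $(2)$, and the general case follows by density using $(3)$.

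The remaining parts are then routine. For $(6)$, use $(5)$ to regard $g$ as a measurable function, note $\{\left\vert g\right\vert>1/N\}$ has finite measure, truncate $f$ to $f^{(N)}:=f\,\chi_{\{\left\vert f\right\vert\leq N\}\cap\{\left\vert g\right\vert>1/N\}}\in L_0^{\infty}(\mu)$, check via $(1)$ and $(4)$ that $(f^{(N)})$ is $\beta$-Cauchy, identify its limit with $f$ by $(5)$, and pass to the limit in $\beta(f^{(N)})\leq\beta(g)$. Part $(7)$ is a dominated-convergence statement: using $h\in L^{\beta}(\mu)$ and $(4)$, reduce to a set of finite measure on which $h$ is bounded, and there turn convergence in measure into $\beta$-convergence via Chebyshev's inequality and property $(2)$; that $f\in L^{\beta}(\mu)$ follows from $\left\vert f\right\vert\leq h$ and $(6)$. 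For $(8)$, $L_0^{\infty}(\mu)$ is $\beta$-dense in $L^{\beta}(\mu)$ by construction, and separability of $L^{1}(\mu)$ furnishes a countable family of simple functions --- rational combinations of indicators from a fixed countable algebra of finite-measure sets generating the $\sigma$-algebra modulo null sets --- that is $L^{1}$-dense; Chebyshev's inequality together with property $(2)$ upgrades $L^{1}$-approximation of a function in $L_0^{\infty}(\mu)$ to $\beta$-approximation, producing a countable $\beta$-dense subset of $L^{\beta}(\mu)$.
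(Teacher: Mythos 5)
Your two convexity observations are correct, and they do yield, with no approximation at all, the contraction $\beta\left(\chi_{G}h\right)\leq\beta\left(h\right)$ for arbitrary $h\in L_{0}^{\infty}\left(\mu\right)$ and arbitrary measurable $G$, as well as $\beta\left(wh\right)\leq\left\Vert w\right\Vert_{\infty}\beta\left(h\right)$ for \emph{simple} $w$. The gap is in passing from simple multipliers to general ones, which is part $\left(1\right)$ itself. Writing $f=wg$ with $\left\vert w\right\vert\leq1$ measurable but not simple, your plan is to approximate ($w$ uniformly by simple $w_{n}$, or $f$ by layer-cake sums); but controlling the resulting error, e.g.\ $\beta\left(\left(w_{n}-w\right)g\right)\leq\left\Vert w_{n}-w\right\Vert_{\infty}\beta\left(g\right)$, is exactly the general monotonicity being proved, since $\left(w_{n}-w\right)g$ is not simple. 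You acknowledge the circularity and propose to break it by running $\left(1\right)$ and $\left(5\right)$ as a joint bootstrap, but the circle does not break there: every proof of $\left(5\right)$, including your own sketch, needs estimates such as $\beta\left(f_{N}\chi_{A_{n}}\right)\leq\left\Vert f_{N}\right\Vert_{\infty}\beta\left(\chi_{A_{n}}\right)$ and $\beta\left(h\right)\leq\theta\,\beta\left(\chi_{F_{N}}\right)$ when $\left\vert h\right\vert\leq\theta\chi_{F_{N}}$, applied to functions that are not simple. Since no Fatou-type lower semicontinuity of $\beta$ is assumed, neither uniform approximation nor a binary-expansion of the multiplier lets you pass these bounds to the limit. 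As written the bootstrap is circular, and this is a genuine gap.

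The paper disposes of $\left(1\right)$ directly in two lines, with no approximation: if $\left\vert f\right\vert\leq\left\vert g\right\vert$, then $f=g\left(u+v\right)/2$ for two measurable unimodular functions $u,v$ (every complex number of modulus at most $1$ is the average of two of modulus $1$), whence $\beta\left(f\right)\leq\left[\beta\left(ug\right)+\beta\left(vg\right)\right]/2=\beta\left(\left\vert g\right\vert\right)=\beta\left(g\right)$, using only $\beta\left(\cdot\right)=\beta\left(\left\vert\cdot\right\vert\right)$ and the triangle inequality. With $\left(1\right)$ in hand the dependency chain runs forward, $\left(1\right)\Rightarrow\left(2\right)\Rightarrow\left(4\right)\Rightarrow\left(5\right)$, and your treatment of $\left(4\right)$--$\left(8\right)$ is essentially the paper's: $\left(6\right)$ writes $f=wg$ with $w\in L^{\infty}\left(\mu\right)$ once $\left(5\right)$ identifies $g$ with a measurable function, $\left(7\right)$ is the dominated convergence argument you describe, and $\left(8\right)$ upgrades $L^{1}$-density to $\beta$-density via $\left(7\right)$. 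Replace your treatment of $\left(1\right)$ with the $u+v$ decomposition and everything downstream closes.
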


\begin{proof}
$\left(  1\right)  .$ If $\left \vert f\right \vert \leq \left \vert g\right \vert
$, then there are two measurable functions $u,v$ with $\left \vert u\right \vert
=\left \vert v\right \vert =1$ and $f=g\left(  u+v\right)  /2$, which implies
$\beta \left(  f\right)  \leq \left[  \beta \left(  \left \vert ug\right \vert
\right)  +\beta \left(  \left \vert vg\right \vert \right)  \right]
/2=\beta \left(  \left \vert g\right \vert \right)  =\beta \left(  g\right)  .$

$\left(  2\right)  $. Since $\left \vert wf\right \vert \leq \left \Vert
w\right \Vert _{\infty}\left \vert f\right \vert ,$ it follows from part $\left(
1\right)  $ that
\[
\beta \left(  wf\right)  \leq \beta \left(  \left \Vert w\right \Vert _{\infty
}\left \vert f\right \vert \right)  =\left \Vert w\right \Vert _{\infty}%
\beta \left(  f\right)  .
\]

$\left(  3\right)  $. The mapping $M_{w}:$ $L_{0}^{\infty}\left(  \mu \right)
\rightarrow L_{0}^{\infty}\left(  \mu \right)  $ defined by $M_{w}f=wf=fw$ is
bounded on $L_{0}^{\infty}\left(  \mu \right)  $ equipped with the norm
$\beta,$ so it has a unique bounded linear extension to the completion
$L^{\beta}\left(  \mu \right)  .$

$\left(  4\right)  .$ Suppose $\left \{  E_{n}\right \}  $ is a sequence of
measurable sets such that $\mu \left(  E_{n}\cap F\right)  \rightarrow0$ for
every $F$ with $\mu \left(  F\right)  <\infty$. Define $T_{n}:L^{\beta}\left(
\mu \right)  \rightarrow L^{\beta}\left(  \mu \right)  $ by $T_{n}f=\chi_{E_{n}%
}f$. The set $\mathcal{E}=\left \{  f\in L^{\beta}\left(  \mu \right)
:\beta \left(  T_{n}f\right)  \rightarrow0\right \}  $ is a closed linear
subspace. If $f\in L_{0}^{\infty}\left(  \mu \right)  $, then there is a set
$F$ with $\mu \left(  F\right)  <\infty$ such that $f=\chi_{F}f,$ so%
\[
\beta \left(  T_{n}f\right)  \leq \left \Vert f\right \Vert _{\infty}\beta \left(
\chi_{E_{n}\cap F}\right)  \rightarrow0
\]
since $\mu \left(  E_{n}\cap F\right)  \rightarrow0$. Hence
\[
L^{\beta}\left(  \mu \right)  \subset L_{0}^{\infty}\left(  \mu \right)
^{-\beta}\subset \mathcal{E}.
\]

$\left(  5\right)  $. By definition there is an $f\in L^{\beta}\left(
\mu \right)  $ such that $\beta \left(  f_{n}-f\right)  \rightarrow0.$ Choose
$M$ so that $\sup_{n\geq1}\beta \left(  f_{n}\right)  <M<\infty$. At this point
we do not know that $f$ is a measurable function. Suppose $\mu \left(
F\right)  <\infty$ and $\varepsilon>0$. It easily follows from part $\left(
4\right)  $ that there is a $\delta>0$ such that when $E\subset F$ and
$\mu \left(  E\right)  <\delta$, we have $\beta \left(  \chi_{E}f\right)
<\varepsilon/4$. There is an $N\in \mathbb{N}$ such that $n\geq
N\Longrightarrow \beta \left(  f_{n}-f\right)  <\varepsilon/4$. Thus $n\geq N$
and $E\subset F$ and $\mu \left(  E\right)  <\delta$ implies%
\[
\beta \left(  \chi_{E}f_{n}\right)  \leq \beta \left(  \chi_{E}\left(
f_{n}-f\right)  \right)  +\beta \left(  \chi_{E}f\right)  <\varepsilon
/4+\varepsilon/4=\varepsilon/2.
\]
Since $f_{n}\chi_{F}\rightarrow0$ in measure, there is an $N_{1}>N$ such that
if $n\geq N_{1}$ and if $F_{n}=\left \{  x\in F:\left \vert f_{n}\left(
x\right)  \right \vert \geq \varepsilon/4\beta \left(  \chi_{F}\right)  \right \}
,$ then $\mu \left(  F_{n}\right)  <\delta$. Thus $n\geq N_{1}$ implies
\[
\beta \left(  \chi_{F}f_{n}\right)  \leq \beta \left(  \chi_{F_{n}}f_{n}\right)
+\left(  \varepsilon/4\beta \left(  \chi_{F}\right)  \right)  \beta \left(
\chi_{F}\right)  <\varepsilon.
\]
So $\chi_{F}f=0$ for every $F\subset \Omega$ with $\mu \left(  F\right)
<\infty$. It follows from part $\left(  4\right)  $ and the fact that $\mu$ is
$\sigma$-finite that $\beta \left(  f\right)  =\lim \beta \left(  f_{n}\right)
=0.$

$\left(  6\right)  $. Suppose $\left \vert f\right \vert \leq \left \vert
g\right \vert $ and $g\in L^{\beta}\left(  \mu \right)  $. We know from part
$\left(  5\right)  $ that $g$ is a measurable function so there is a $w\in
L^{\infty}\left(  \mu \right)  $ such that $f=wg\in L^{\beta}\left(
\mu \right)  $ (by part $\left(  2\right)  $).

$\left(  7\right)  $. Assume the hypothesis of part $\left(  7\right)  $ holds
and suppose $\varepsilon>0.$ Since $\mu$ is $\sigma$-finite, it follows that
there is a subsequence $\left \{  f_{n_{k}}\right \}  $ that converges to $f$
a.e. $\left(  \mu \right)  $. Hence $\left \vert f\right \vert \leq h$ a.e.
$\left(  \mu \right)  $. Then, by part $\left(  4\right)  $, there is a
$\delta$ such that $E\subset \Omega$ and $\mu \left(  E\right)  <\delta$ implies
$\beta \left(  \chi_{E}h\right)  <\varepsilon/5$. By part $\left(  6\right)  $
it follows that if $\mu \left(  E\right)  <\delta,$ then $\beta \left(  \chi
_{E}f\right)  ,\beta \left(  \chi_{E}f_{n}\right)  \leq \beta \left(  \chi
_{E}h\right)  <\varepsilon/5$ for every $n\geq1$. We also know from part
$\left(  4\right)  $ that there is a set $F$ with $\mu \left(  F\right)
<\infty$ such that $\beta \left(  \left(  1-\chi_{F}\right)  h\right)
<\varepsilon/5$, which implies $\beta \left(  \left(  1-\chi_{F}\right)
f\right)  ,\beta \left(  \left(  1-\chi_{F}\right)  f_{n}\right)
<\varepsilon/5$ for every $n\geq1$. But $f_{n}\chi_{F}\rightarrow f\chi_{F}$
in measure, so that if $E_{n}=\left \{  \omega \in F:\left \vert f_{n}\left(
\omega \right)  -f\left(  \omega \right)  \right \vert \geq \varepsilon
/5\beta \left(  \chi_{F}\right)  \right \}  $, then $\mu \left(  E_{n}\right)
\rightarrow0.$ Thus there is an $N\in \mathbb{N}$ such that, $n\geq N$ implies
$\mu \left(  E_{n}\right)  <\delta$, which implies%
\[
\beta \left(  f_{n}-f\right)  \leq
\]%
\[
\beta \left(  f_{n}\chi_{E_{n}}\right)  +\beta \left(  f\chi_{E_{n}}\right)
+\beta \left(  \left(  1-\chi_{F}\right)  f\right)  +\beta \left(  \left(
1-\chi_{F}\right)  f_{n}\right)  +\beta \left(  \chi_{F\backslash E_{n}}\left[
\varepsilon/5\beta \left(  \chi_{F}\right)  \right]  \right)  <\varepsilon.
\]

$\left(  8\right)  .$ It is clear that $L^{\beta}\left(  \mu \right)  $ is a
Banach space. Write $\Omega=\cup_{n\geq1}\Omega_{n}$ where $\left \{
\Omega_{n}\right \}  $ is an increasing sequence of sets with $\mu \left(
\Omega_{n}\right)  <\infty$ for $n\geq1$. Since $L^{1}\left(  \mu \right)  $ is
separable, we can find a countable subset $\mathcal{W}_{n}$ that is a
$\left \Vert \cdot \right \Vert _{1}$-dense subset of $\chi_{\Omega_{n}}\left \{
f\in L^{\infty}\left(  \mu \right)  :\left \vert f\right \vert \leq n\right \}  $.
It follows from part $\left(  7\right)  $ that $\mathcal{W}_{n}^{-\beta}$
contains $\chi_{\Omega_{n}}L_{0}^{\infty}\left(  \mu \right)  ,$ and if we let
$\mathcal{W=\cup}_{n\geq1}\mathcal{W}_{n}$, it follows from $\mathcal{W}%
^{-\beta}$ contains $L_{0}^{\infty}\left(  \mu \right)  ^{-\beta}=L^{\beta
}\left(  \mu \right)  $. Hence $L^{\beta}\left(  \mu \right)  $ is separable.
\end{proof}

\bigskip

Suppose $X$ is a separable Banach space and define
\[
L^{\beta}\left(  \mu,X\right)  =\left \{  f|f:\Omega \rightarrow X\text{ is
measurable and }\left \Vert \cdot \right \Vert \circ f\in L^{\beta}\left(
\mu \right)  \right \}  .
\]
If $f:\Omega \rightarrow X,$ define $\left \vert f\right \vert :\Omega
\rightarrow \lbrack0,\infty)$ by%
\[
\left \vert f\right \vert \left(  \omega \right)  =\left \Vert f\left(
\omega \right)  \right \Vert ,
\]
i.e., $\left \vert f\right \vert =\left \Vert \cdot \right \Vert \circ f$. It is
clear that if we define $\beta \left(  f\right)  =\beta \left(  \left \Vert
\cdot \right \Vert \circ f\right)  $, then $L^{\beta}\left(  \mu,X\right)  $ is
a Banach space. Moreover, $L^{\beta}\left(  \mu,X\right)  $ is an $L^{\infty
}\left(  \mu \right)  $-module if we define $\varphi f$ with $\varphi \in
L^{\infty}\left(  \mu \right)  $ and $f\in L^{\beta}\left(  \mu,X\right)  $ by%
\[
\left(  \varphi f\right)  \left(  \omega \right)  =\varphi \left(
\omega \right)  f\left(  \omega \right)  \in X.
\]
It is clear from part $\left(  1\right)  $ of Lemma \ref{1} that%
\[
\beta \left(  \varphi f\right)  \leq \left \Vert \varphi \right \Vert _{\infty
}\beta \left(  f\right)  .
\]
Since
\[
\left \vert \chi_{E}f\right \vert =\chi_{E}\left \vert f\right \vert
\]
for every $f\in L^{\beta}\left(  \mu,X\right)  $, it easily follows that parts
$\left(  2\right)  ,\left(  4\right)  ,\left(  6\right)  $ and $\left(
7\right)  $ in Lemma \ref{1} remain true if $f\in L^{\beta}\left(
\mu,X\right)  $.

\begin{lemma}
\label{2.2}If $X$ is separable, then $L^{\beta}\left(  \mu,X\right)  $ is separable.
\end{lemma}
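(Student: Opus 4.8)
The plan is to exhibit an explicit countable $\beta$-dense subset of $L^{\beta}\left(\mu,X\right)$, following the scheme of part $\left(8\right)$ of Lemma \ref{1} but feeding in a countable dense subset of $X$ together with the classical construction of measurable simple functions. First I would recall from the proof of Lemma \ref{1}$\left(8\right)$ the countable set $\mathcal{W}=\cup_{n\geq1}\mathcal{W}_{n}\subseteq L_{0}^{\infty}\left(\mu\right)$ whose $\beta$-closure is all of $L^{\beta}\left(\mu\right)$ (since $\mathcal{W}\subseteq L^{\beta}\left(\mu\right)$ and $\mathcal{W}^{-\beta}\supseteq L_{0}^{\infty}\left(\mu\right)^{-\beta}=L^{\beta}\left(\mu\right)$). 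Next fix a sequence $\{x_{k}\}_{k\geq1}$ dense in $X$, which we may assume contains $0$, say $x_{1}=0$, and set
\[
\mathcal{V}=\left\{  \sum_{j=1}^{N}w_{j}x_{i_{j}}:N\in\mathbb{N},\ i_{1},\dots,i_{N}\in\mathbb{N},\ w_{1},\dots,w_{N}\in\mathcal{W}\right\}  ,
\]
where $w_{j}x_{i_{j}}$ is the function $\omega\mapsto w_{j}\left(\omega\right)x_{i_{j}}$. Since $\left\vert w_{j}x_{i_{j}}\right\vert=\left\Vert x_{i_{j}}\right\Vert\left\vert w_{j}\right\vert\in L^{\beta}\left(\mu\right)$, we have $\mathcal{V}\subseteq L^{\beta}\left(\mu,X\right)$, and $\mathcal{V}$ is countable; it remains only to prove that $\mathcal{V}$ is $\beta$-dense.

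To that end, fix $f\in L^{\beta}\left(\mu,X\right)$. For $m\geq1$ and $\omega\in\Omega$, let $k\left(m,\omega\right)$ be the least index $k\leq m$ with $\left\Vert f\left(\omega\right)-x_{k}\right\Vert=\min_{1\leq j\leq m}\left\Vert f\left(\omega\right)-x_{j}\right\Vert$, and put $s_{m}\left(\omega\right)=x_{k\left(m,\omega\right)}$. Because each $\omega\mapsto\left\Vert f\left(\omega\right)-x_{j}\right\Vert$ is measurable, the sets $B_{k}^{\left(m\right)}=\{k\left(m,\cdot\right)=k\}$ are measurable and $s_{m}$ is an $X$-valued simple function; because $x_{1}=0$ we get $\left\Vert f\left(\omega\right)-s_{m}\left(\omega\right)\right\Vert\leq\left\Vert f\left(\omega\right)\right\Vert$ and hence $\left\vert s_{m}\right\vert\leq2\left\vert f\right\vert$; and by density of $\{x_{k}\}$ in $X$ we have $s_{m}\left(\omega\right)\rightarrow f\left(\omega\right)$ for every $\omega$. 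Consequently $\chi_{E}\left\vert s_{m}-f\right\vert\rightarrow0$ in measure for every $E$ with $\mu\left(E\right)<\infty$, while $\left\vert s_{m}\right\vert\leq2\left\vert f\right\vert\in L^{\beta}\left(\mu\right)$, so the $X$-valued version of part $\left(7\right)$ of Lemma \ref{1} yields $\beta\left(s_{m}-f\right)\rightarrow0$.

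Finally, writing $s_{m}=\sum_{k:x_{k}\neq0}\chi_{B_{k}^{\left(m\right)}}x_{k}$ (a finite sum over $k\leq m$), I would note that on $B_{k}^{\left(m\right)}$ one has $\left\Vert f\left(\omega\right)-x_{k}\right\Vert\leq\left\Vert f\left(\omega\right)\right\Vert$, so $\chi_{B_{k}^{\left(m\right)}}\leq\left(2/\left\Vert x_{k}\right\Vert\right)\left\vert f\right\vert$ and hence $\chi_{B_{k}^{\left(m\right)}}\in L^{\beta}\left(\mu\right)$ by part $\left(6\right)$ of Lemma \ref{1}. Since $\mathcal{W}^{-\beta}=L^{\beta}\left(\mu\right)$, given $\varepsilon>0$ I can choose $w_{k}\in\mathcal{W}$ with $\left\Vert x_{k}\right\Vert\beta\left(\chi_{B_{k}^{\left(m\right)}}-w_{k}\right)$ so small that $\beta\bigl(s_{m}-\sum_{k}w_{k}x_{k}\bigr)\leq\sum_{k}\left\Vert x_{k}\right\Vert\beta\left(\chi_{B_{k}^{\left(m\right)}}-w_{k}\right)<\varepsilon$; as $\sum_{k}w_{k}x_{k}\in\mathcal{V}$ and $\beta\left(f-s_{m}\right)\rightarrow0$, this places $f$ in the $\beta$-closure of $\mathcal{V}$. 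Thus $\mathcal{V}$ is a countable dense subset, and since $L^{\beta}\left(\mu,X\right)$ is already a Banach space it is a separable Banach space.

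The step I expect to be the main obstacle is the construction and analysis of the $s_{m}$: one must verify that the level sets $\{k\left(m,\cdot\right)=k\}$ are genuinely measurable — which rests on strong measurability of $f$, i.e. that $\omega\mapsto\left\Vert f\left(\omega\right)-x\right\Vert$ is measurable for each $x\in X$ — and, more importantly, secure the pointwise domination $\left\vert s_{m}\right\vert\leq2\left\vert f\right\vert$, since it is exactly this domination that allows part $\left(7\right)$ of Lemma \ref{1} to convert pointwise convergence into $\beta$-convergence. Everything else is routine triangle-inequality bookkeeping.
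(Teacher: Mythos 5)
Your proof is correct, and it reaches the same destination by a somewhat different (and more self-contained) route. The paper disposes of this lemma in one line: it cites Diestel--Uhl for the separability of the Bochner space $L^{1}\left(\mu,X\right)$ and then says to imitate the proof of part $\left(8\right)$ of Lemma \ref{1}, i.e.\ take a countable $\left\Vert\cdot\right\Vert_{1}$-dense subset of each truncated ball $\chi_{\Omega_{n}}\left\{f:\left\vert f\right\vert\leq n\right\}$ in the vector-valued setting and upgrade $\left\Vert\cdot\right\Vert_{1}$-density to $\beta$-density via the dominated convergence statement in part $\left(7\right)$. You instead build the countable dense set by hand, as finite sums $\sum_{j}w_{j}x_{i_{j}}$ with $w_{j}$ drawn from the scalar dense set $\mathcal{W}$ and $x_{i_{j}}$ from a dense sequence in $X$, and you approximate a general $f$ first by the classical nearest-point simple functions $s_{m}$ and then by elements of $\mathcal{V}$. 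The engine is the same in both arguments --- the vector-valued part $\left(7\right)$ of Lemma \ref{1} converts pointwise domination plus local convergence in measure into $\beta$-convergence --- but your version avoids quoting the separability of $L^{1}\left(\mu,X\right)$ and supplies the details the paper leaves implicit: the measurability of the level sets $B_{k}^{\left(m\right)}$ (which holds because $f$ is Borel measurable into a separable space, so $\omega\mapsto\left\Vert f\left(\omega\right)-x_{k}\right\Vert$ is measurable), the domination $\left\vert s_{m}\right\vert\leq2\left\vert f\right\vert$ secured by putting $x_{1}=0$ among the candidates, and the observation that $\chi_{B_{k}^{\left(m\right)}}\in L^{\beta}\left(\mu\right)$ via part $\left(6\right)$ so that $\mathcal{W}$ can be used to approximate the coefficients. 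The trade-off is purely one of length versus reliance on an external citation; both proofs are sound.
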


\begin{proof}
It is well known \cite{DU} that $L^{1}\left(  \mu,X\right)  $ is separable. We
can imitate the proof of part $\left(  8\right)  $ of Lemma \ref{1} to get the
desired conclusion.
\end{proof}

\bigskip

Recall that $\alpha$ is a\emph{ rotationally symmetric norm} on $L^{\infty
}\left(  \mathbb{T}\right)  $ if

\begin{enumerate}
\item $\alpha \left(  1\right)  =1$,

\item $\alpha \left(  f\right)  =\alpha \left(  \left \vert f\right \vert \right)
,$

\item If $g\left(  z\right)  =f\left(  e^{i\theta}z\right)  $ ($\theta
\in \mathbb{R}$), then $\alpha \left(  f\right)  =\alpha \left(  g\right)  $.
\end{enumerate}

We say that a rotationally symmetric norm $\alpha$ is \emph{continuous} if
\[
\lim_{m\left(  E\right)  \rightarrow0}\alpha \left(  \chi_{E}\right)  =0.
\]

If $\alpha$ is a continuous rotationally symmetric norm on $L^{\infty}\left(
\mathbb{T}\right)  ,$ the space $H^{\alpha}$ is defined in the first part to
be the $\alpha$-closure of the linear span of $\left \{  1,z,z^{2}%
,\ldots \right \}  $. It is clear that $H^{\alpha}$ is separable and $H^{\infty
}\subset H^{\alpha}$. We also obtained a new version of Beurling's theorem in
the first part, namely, that if $M\neq \left \{  0\right \}  $ is a closed linear
subspace of $H^{\alpha}$ and $zM\subset M$, then $M=\varphi H^{\alpha}$ for
some inner function $\varphi \in H^{\infty}$. It follows from Lemma \ref{2.2}
that $L^{\beta}\left(  \mu,H^{\alpha}\right)  $ is separable.

We define $L^{\infty}\left(  \mu,H^{\infty}\right)  $ to be the set of
(equivalence classes) of bounded functions $\Phi:\Omega \rightarrow H^{\infty}$
that are weak*-measurable, and we define $\left \Vert \Phi \right \Vert _{\infty
}$ to be the essential supremum of $\left \Vert \cdot \right \Vert _{\infty}%
\circ \Phi$. It is clear that $L^{\infty}\left(  \mu,H^{\infty}\right)  $ is a
Banach algebra, and, since $H^{\alpha}$ is an $H^{\infty}$-module, we can make
$L^{\beta}\left(  \mu,H^{\alpha}\right)  $ an $L^{\infty}\left(  \mu
,H^{\infty}\right)  $-module by%
\[
\left(  \Phi f\right)  \left(  \omega \right)  =\Phi \left(  \omega \right)
f\left(  \omega \right)  .
\]
It is also clear%
\[
\beta \left(  \Phi f\right)  \leq \left \Vert \Phi \right \Vert _{\infty}%
\beta \left(  f\right)  .
\]

We can also define the \emph{shift operator} $S$ on $L^{\beta}\left(
\mu,H^{\alpha}\right)  $ by%
\[
\left(  \left(  Sf\right)  \left(  \omega \right)  \right)  \left(  z\right)
=z\left(  f\left(  \omega \right)  \right)  \left(  z\right)  .
\]
It is clear that $S$ is an isometry on $L^{\beta}\left(  \mu,H^{\alpha
}\right)  $ and that $S$ is an $L^{\infty}\left(  \mu,H^{\infty}\right)
$-module homomorphism, i.e.,%
\[
S\left(  \Phi f\right)  =\Phi \left(  Sf\right)
\]
whenever $f\in L^{\beta}\left(  \mu,H^{\alpha}\right)  $ and $\Phi \in
L^{\infty}\left(  \mu,H^{\infty}\right)  $.

\section{The Main Result}

Our main result (Theorem \ref{GBT}) is a generalization of the classical
Beurling theorem for $H^{p}$ \cite{Sr} and its extension to $H^{\alpha}$
\cite[Theorem 7.8]{Chen}. A key tool is a result on measurable cross-sections
taken from \cite{Arv}. A subset $A$ of a separable metric space $Y$ is
\emph{absolutely measurable} if $A$ is $\mu$-measurable for every $\sigma
$-finite Borel measure $\mu$ on $Y$. A function with domain $Y$ is
\emph{absolutely measurable} if the inverse image of every Borel set is
absolutely measurable.

\begin{lemma}
\label{arv}Suppose $E$ is a Borel subset of a complete separable metric space
and $Y$ is a separable metric space and $\pi:E\rightarrow Y$ is continuous.
Then $\pi \left(  E\right)  $ is an absolutely measurable subset of $Y$ and
there is an absolutely measurable function \newline$\rho:\pi \left(  E\right)
\rightarrow E$ such that $\left(  \pi \circ \rho \right)  \left(  y\right)  =y$
for every $y\in E$.
\end{lemma}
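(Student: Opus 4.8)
The plan is to deduce Lemma~\ref{arv} from two classical facts of descriptive set theory: analytic sets are absolutely (universally) measurable, and continuous surjections of Polish spaces admit absolutely measurable sections. I sketch a reasonably self-contained route. First I reduce to the case that $E$ is a closed subset of Baire space $\mathcal{N}=\mathbb{N}^{\mathbb{N}}$ (the case $E=\varnothing$ being trivial): a nonempty Borel subset $E$ of a complete separable metric space is the image of a closed set $N\subseteq\mathcal{N}$ under a continuous bijection $g\colon N\to E$. Replacing $\pi$ by the continuous map $\pi\circ g\colon N\to Y$, it suffices to produce an absolutely measurable section $\tau\colon\pi(N)\to N$ of $\pi\circ g$; then $\rho:=g\circ\tau$ works, since $g$ is continuous and the absolutely measurable sets form a $\sigma$-algebra, so a $\tau$-preimage of a Borel subset of $N$ is absolutely measurable in $\pi(N)=\pi(E)$.

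Since $\pi(N)$ is a continuous image of the Borel (indeed closed) subset $N$ of the Polish space $\mathcal{N}$, it is an analytic subset of $Y$; by Lusin's theorem analytic sets are absolutely measurable, which gives the first assertion. For the section I use the leftmost-branch construction. For $y\in\pi(N)$ and a finite sequence $s$, call $s$ \emph{$y$-extendible} if the basic clopen cylinder $[s]$ meets $(\pi\circ g)^{-1}(y)\cap N$, equivalently $y\in\pi\bigl(g([s]\cap N)\bigr)$. Define $\tau(y)\in\mathcal{N}$ recursively by letting $\tau(y)(k)$ be the least $n$ for which the sequence obtained by appending $n$ to $\tau(y)|_{k}$ is $y$-extendible. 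The cylinders $[\tau(y)|_k]$ decrease with vanishing diameter and each meets the nonempty closed set $(\pi\circ g)^{-1}(y)\cap N$, so their unique common point $\tau(y)$ lies in that closed set; hence $\tau$ is a well-defined section of $\pi\circ g$.

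It remains to see $\tau$ is absolutely measurable. For fixed $s$ the set $A_s=\{y\in\pi(N):s\text{ is }y\text{-extendible}\}=\pi\bigl(g([s]\cap N)\bigr)$ is analytic, and for a finite sequence $t$ of length $k$ the event $\{y:\tau(y)|_k=t\}$ asserts exactly that $t$ is the lexicographically least $y$-extendible sequence of length $k$, which is a finite Boolean combination of the sets $A_s$. Thus $\tau^{-1}$ of every basic cylinder of $\mathcal{N}$ lies in the $\sigma$-algebra generated by the analytic subsets of $Y$, and every member of that $\sigma$-algebra is absolutely measurable. Since the basic cylinders generate the Borel $\sigma$-algebra of $N$, both $\tau$ and $\rho=g\circ\tau$ are absolutely measurable, and $\pi\circ\rho=\mathrm{id}$ on $\pi(E)$ by construction. (Alternatively, one could quote the von Neumann / Jankov selection theorem directly in place of the explicit construction.)

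The main obstacle is not any single clever estimate but the descriptive-set-theoretic infrastructure: the nontrivial theorem that analytic sets are absolutely measurable, the parametrization of the Borel set $E$ by a closed subset of Baire space, and the careful bookkeeping needed to check that composing with the continuous maps $g$ and $\pi$ and with the lexicographically least selection never leaves the class of absolutely measurable maps.
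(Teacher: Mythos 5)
Your proof is correct. The paper gives no proof of this lemma --- it is quoted directly from Arveson's book \cite{Arv} --- and your route (parametrize the Borel set $E$ by a closed subset of $\mathbb{N}^{\mathbb{N}}$ via a continuous bijection, select the leftmost branch of each fiber, and invoke the universal measurability of analytic sets to handle both $\pi(E)$ and the finitely many extendibility conditions defining each cylinder preimage) is essentially the standard cross-section argument found there, so there is nothing to add.
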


\begin{theorem}
\label{GBT}A closed linear subspace $M$ of $L^{\beta}\left(  \mu,H^{\alpha
}\right)  $ is an $L^{\infty}\left(  \mu \right)  $-submodule with $S\left(
M\right)  \subset M$ if and only if there is a $\Phi \in L^{\infty}\left(
\mu,H^{\infty}\right)  $ such that

\begin{enumerate}
\item For every $\omega \in \Omega$, we have $\Phi \left(  \omega \right)  =0$ or
$\Phi \left(  \omega \right)  $ is an inner function,

\item $M=\Phi L^{\beta}\left(  \mu,H^{\alpha}\right)  $.
\end{enumerate}
\end{theorem}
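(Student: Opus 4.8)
The plan is to dispatch the ``if'' direction directly and to reduce the ``only if'' direction to the scalar Beurling theorem \cite[Theorem 7.8]{Chen} applied fiberwise, using Lemma \ref{arv} to make the fiberwise construction measurable.

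For the ``if'' direction, suppose $\Phi \in L^{\infty}(\mu,H^{\infty})$ is fiberwise inner-or-zero and set $M=\Phi L^{\beta}(\mu,H^{\alpha})$. Since $S$ is an $L^{\infty}(\mu,H^{\infty})$-module map and multiplication by $L^{\infty}(\mu)$ commutes with multiplication by $\Phi$, $M$ is an $S$-invariant $L^{\infty}(\mu)$-submodule. For closedness, let $\Omega_{0}=\{\omega:\Phi(\omega)\neq0\}$, which is measurable because each point evaluation $\omega\mapsto\Phi(\omega)(z)$ ($z\in\mathbb{D}$) is measurable; since $|\Phi(\omega)|=1$ a.e. on $\mathbb{T}$ for $\omega\in\Omega_{0}$ and $\alpha(g)=\alpha(|g|)$, we get $|\Phi f|=\chi_{\Omega_{0}}|f|$, so $\Phi L^{\beta}(\mu,H^{\alpha})=\Phi\{f:f=\chi_{\Omega_{0}}f\}$ and $f\mapsto\Phi f$ is isometric on the closed subspace $\{f:f=\chi_{\Omega_{0}}f\}$. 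Thus $M$ is the isometric image of a Banach space and hence closed.

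For the converse, fix a countable $\beta$-dense subset $\{f_{n}\}$ of $M$ (using separability of $L^{\beta}(\mu,H^{\alpha})$), choose Borel representatives, and put $M_{\omega}=\overline{\{f_{n}(\omega):n\geq1\}}^{H^{\alpha}}$. Because $\beta$-convergence forces a.e. convergence along a subsequence (property $(3)$ of $\beta$ and $\sigma$-finiteness) and $S(M)\subseteq M$, a countable family of such conditions shows that for a.e. $\omega$ the set $M_{\omega}$ is a closed $z$-invariant linear subspace of $H^{\alpha}$; by \cite[Theorem 7.8]{Chen}, $M_{\omega}=\{0\}$ or $M_{\omega}=\varphi_{\omega}H^{\alpha}$ for an inner $\varphi_{\omega}\in H^{\infty}$. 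The crux is to choose $\varphi_{\omega}$ measurably despite the unimodular ambiguity, and this is exactly the role of Lemma \ref{arv}. Let $\mathcal{I}_{0}\subseteq H^{\alpha}$ consist of $0$ and all inner functions; using $\left\Vert \cdot\right\Vert _{1}\leq\alpha$ one sees that point evaluations on $\mathbb{D}$ and Fourier coefficients are continuous on $H^{\alpha}$, so $\mathcal{I}_{0}$ is Borel in the Polish space $H^{\alpha}$. Let $F:\Omega\rightarrow(H^{\alpha})^{\mathbb{N}}$ be $\omega\mapsto(f_{n}(\omega))_{n}$ and $\nu=F_{*}\mu$; for $y=(y_{n})$ put $M_{y}=\overline{\{y_{n}\}}^{H^{\alpha}}$, so that $M_{F(\omega)}=M_{\omega}$. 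The set
\[
E=\{(y,\varphi)\in(H^{\alpha})^{\mathbb{N}}\times\mathcal{I}_{0}:M_{y}=\varphi H^{\alpha}\}
\]
is Borel, since each of the inclusions $\varphi H^{\alpha}\subseteq M_{y}$ and $M_{y}\subseteq\varphi H^{\alpha}$ unwinds into countably many conditions asserting that an infimum of jointly continuous distances vanishes. Applying Lemma \ref{arv} to the projection $\pi:E\rightarrow(H^{\alpha})^{\mathbb{N}}$ shows that $\pi(E)$ is absolutely measurable, hence $\nu$-measurable, and $\nu$-conull (it contains the conull Borel set of those $y$ for which $M_{y}$ is a closed $z$-invariant subspace), and it yields an absolutely measurable section $y\mapsto(y,\Psi(y))$ with $\Psi:\pi(E)\rightarrow\mathcal{I}_{0}$. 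Then $\Phi:=\Psi\circ F$, extended by $0$ on a null set, is $\mu$-measurable, takes values in $\mathcal{I}_{0}$, and satisfies $M_{\omega}=\Phi(\omega)H^{\alpha}$ a.e.; since every $\Phi(\omega)$ is inner or $0$ we have $\left\Vert \Phi(\omega)\right\Vert _{\infty}\leq1$, and as $\Phi$ is Borel into $H^{\alpha}\hookrightarrow L^{1}(\mathbb{T})$ it is weak*-measurable into $H^{\infty}$, so $\Phi\in L^{\infty}(\mu,H^{\infty})$ and $(1)$ holds.

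It remains to prove $M=\Phi L^{\beta}(\mu,H^{\alpha})$, for which the point is that $M$ is ``local'': $M=\widetilde{M}:=\{f\in L^{\beta}(\mu,H^{\alpha}):f(\omega)\in M_{\omega}\text{ a.e.}\}$. The inclusion $M\subseteq\widetilde{M}$ comes from passing to a.e. convergent subsequences. For $\widetilde{M}\subseteq M$, given $f\in\widetilde{M}$ and $\varepsilon>0$, write $\Omega=\bigcup_{k}\Omega_{k}$ with $\mu(\Omega_{k})<\infty$, choose $k$ with $\beta((1-\chi_{\Omega_{k}})f)<\varepsilon$, and with $\delta=\varepsilon/\beta(\chi_{\Omega_{k}})$ partition $\Omega_{k}$ (mod null) into measurable sets $A_{n}$ on which $\left\Vert f-f_{n}\right\Vert _{\alpha}<\delta$ (possible since $f(\omega)\in\overline{\{f_{n}(\omega)\}}$ a.e.); then $g=\sum_{n}\chi_{A_{n}}f_{n}$ lies in $L^{\beta}(\mu,H^{\alpha})$ (being dominated by $|f|+\delta\chi_{\Omega_{k}}$) and in $M$ (its partial sums lie in the $L^{\infty}(\mu)$-module $M$ and converge to $g$ by Lemma \ref{1}), and $\beta(f-g)<2\varepsilon$; hence $f\in M$. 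Finally, if $\Phi(\omega)\neq0$ let $h(\omega)=\overline{\Phi(\omega)}f(\omega)$, which is the quotient $f(\omega)/\Phi(\omega)\in H^{\alpha}$ and has $\left\Vert h(\omega)\right\Vert _{\alpha}=\left\Vert f(\omega)\right\Vert _{\alpha}$, and put $h(\omega)=0$ otherwise; then $h\in L^{\beta}(\mu,H^{\alpha})$ and $\Phi h=f$, so $M=\widetilde{M}\subseteq\Phi L^{\beta}(\mu,H^{\alpha})$, while $(\Phi g)(\omega)=\Phi(\omega)g(\omega)\in M_{\omega}$ a.e. gives the reverse inclusion. The main obstacle is precisely the measurable selection of the $\varphi_{\omega}$: each fiber determines its inner function only up to a unimodular scalar, so there is no naive candidate for $\Phi$, and it is the Borel structure of the graph $E$ together with Arveson's cross-section Lemma \ref{arv} that produces a legitimate $\Phi\in L^{\infty}(\mu,H^{\infty})$; a secondary technical point is the localness $M=\widetilde{M}$, which relies on the $L^{\infty}(\mu)$-module hypothesis, the $\sigma$-finiteness of $\mu$, the defining properties of $\beta$, and the conclusions of Lemma \ref{1}.
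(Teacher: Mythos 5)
Your argument is correct and follows the same overall architecture as the paper's proof: the easy direction via the observation that multiplication by a fiberwise inner $\Phi$ is isometric on $\chi_{\Omega_{0}}L^{\beta}\left(  \mu,H^{\alpha}\right)  $; the localization claim $M=\left \{  f:f\left(  \omega \right)  \in M_{\omega}\text{ a.e.}\right \}  $; the fiberwise application of \cite[Theorem 7.8]{Chen}; and Arveson's cross-section lemma (Lemma \ref{arv}) to select $\Phi \left(  \omega \right)  $ measurably. Two implementation differences are worth recording. First, for the inclusion $\widetilde{M}\subseteq M$ the paper invokes Lemma \ref{arv} a first time (on a set $E\subseteq H^{\alpha}\times \prod_{n}H^{\alpha}\times \left(  0,1\right)  \times \mathbb{N}$) to produce the index function $n\left(  \omega \right)  $, whereas you obtain the same partition directly by taking the first index $n$ with $\alpha \left(  f\left(  \omega \right)  -f_{n}\left(  \omega \right)  \right)  <\delta$; since the sets $\left \{  \omega:\alpha \left(  f\left(  \omega \right)  -f_{n}\left(  \omega \right)  \right)  <\delta \right \}  $ are measurable, no selection theorem is needed there, which is a genuine simplification. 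Second, for the selection of $\Phi$ the paper works in $\prod_{n}H^{\alpha}\times \prod_{n}H^{\alpha}\times \mathcal{I}\times \prod_{n}\mathbb{N}$ with $\mathcal{I}$ the inner functions in the strong operator topology, encoding $M_{\omega}=\varphi H^{\alpha}$ by explicit witnesses $g_{k}=\varphi h_{k}$ and $\alpha \left(  \varphi-g_{n_{k}}\right)  <1/k$, so that the relevant set is visibly closed; you instead encode it as a Borel subset of $\left(  H^{\alpha}\right)  ^{\mathbb{N}}\times \mathcal{I}_{0}$ via vanishing distances, which works but leaves you owing two verifications you only sketch: that $\mathcal{I}_{0}$ is Borel in $H^{\alpha}$, and that $\operatorname{dist}\left(  y_{n},\varphi H^{\alpha}\right)  =\inf_{h\in D}\alpha \left(  y_{n}-\varphi h\right)  $ for a countable $D\subseteq H^{\infty}$ with each term jointly continuous (you need $D$ inside $H^{\infty}$, not merely dense in $H^{\alpha}$, for the continuity of $\varphi \mapsto \varphi h$). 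A further minor difference: the paper closes $\mathcal{F}$ under $S$ and $\mathbb{Q}+i\mathbb{Q}$-linear combinations so that $M_{\omega}$ is a $z$-invariant subspace for every $\omega$, while you get this only almost everywhere; either suffices. Your final step, dividing $f\left(  \omega \right)  $ by $\Phi \left(  \omega \right)  $ to exhibit $\widetilde{M}\subseteq \Phi L^{\beta}\left(  \mu,H^{\alpha}\right)  $, is actually spelled out in more detail than in the paper, though the measurability of $\omega \mapsto f\left(  \omega \right)  /\Phi \left(  \omega \right)  $ deserves a sentence (e.g., $\alpha \left(  h\left(  \omega \right)  -d\right)  =\alpha \left(  f\left(  \omega \right)  -\Phi \left(  \omega \right)  d\right)  $ for $d$ ranging over a countable dense subset of polynomials).
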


\begin{proof}
Suppose $\left(  1\right)  ,\left(  2\right)  $ are true. It is clear that
$\Phi L^{\beta}\left(  \mu,H^{\alpha}\right)  $ is a shift-invariant
$L^{\infty}\left(  \mu \right)  $-submodule. Let $E=\left \{  \omega \in
\Omega:\Phi \left(  \omega \right)  \neq0\right \}  $. Then $\chi_{E}L^{\beta
}\left(  \mu,H^{\alpha}\right)  $ is clearly closed and multiplication by
$\Phi$ is an isometry on $\chi_{E}L^{\beta}\left(  \mu,H^{\alpha}\right)  $.
Hence $\Phi L^{\beta}\left(  \mu,H^{\alpha}\right)  $ is closed.

Conversely, suppose $M$ is a shift invariant $L^{\infty}\left(  \mu \right)
$-submodule of $L^{\beta}\left(  \mu,H^{\alpha}\right)  $. Since $L^{\beta
}\left(  \mu,H^{\alpha}\right)  $ is separable, $M$ must be separable. We can
choose a countable subset $\mathcal{F}$ of $M$ such that $\mathcal{F}$ is
dense in $M,$ $S\left(  \mathcal{F}\right)  \subset \mathcal{F}$, and
$\mathcal{F}$ is a vector space over the field $\mathbb{Q}+i\mathbb{Q}$ of
complex-rational numbers. The elements of $\mathcal{F}$ are equivalence
classes, but we can choose actual functions to represent $\mathcal{F}$. Then,
for each $\omega \in \Omega$, define $M_{\omega}$ to be the $H^{\alpha}$-closure
of $\left \{  f\left(  \omega \right)  :f\in \mathcal{F}\right \}  $.

\textbf{Claim:} $M=\left \{  h\in L^{\beta}\left(  \mu,H^{\alpha}\right)
:h\left(  \omega \right)  \in M_{\omega}\text{ a.e. }\left(  \mu \right)
\right \}  .$

\textbf{Proof of Claim}: Suppose $h\in M$. Then there is a sequence $\left \{
f_{n}\right \}  $ in $\mathcal{F}$ such that $\beta \left(  f_{n}-h\right)  =$
$\beta \left(  \alpha \circ \left(  f_{n}-h\right)  \right)  \rightarrow0.$ We
know $\mu$ is $\sigma$-finite, so there is an increasing sequence $\left \{
\Omega_{k}\right \}  $ of sets of finite measure whose union is $\Omega.$ Since
$\left \{  \alpha \circ \left(  f_{n}-h\right)  \right \}  $ is a sequence in
$L^{\beta}\left(  \mu \right)  $, it follows from part $\left(  3\right)  $ in
the definition of $\beta$ and the fact that $\mu$ is $\sigma$-finite, that
$\chi_{\Omega_{k}}\alpha \circ \left(  f_{n}-h\right)  \rightarrow0$ in measure
for each $k\geq1$, which, via the Cantor diagonalization argument, implies
that there is a subsequence $\left \{  \alpha \circ \left(  f_{n_{k}}-h\right)
\right \}  $ that converges to $0$ a.e. $\left(  \mu \right)  $. Thus, for
almost every $\omega \in \Omega,$ we have $\alpha \left(  f_{n_{k}}\left(
\omega \right)  -h\left(  \omega \right)  \right)  \rightarrow0.$ Hence
$h\left(  \omega \right)  \in M_{\omega}$ a.e. $\left(  \mu \right)  $.

Conversely, suppose $h\in L^{\beta}\left(  \mu,H^{\alpha}\right)  $ and
$h\left(  \omega \right)  \in M_{\omega}$ a.e. $\left(  \mu \right)  $. By
redefining $h\left(  \omega \right)  =0$ on a set of measure $0$, we can assume
that $h\left(  \omega \right)  \in M_{\omega}$ for every $\omega \in \Omega$. Let
$X=H^{\alpha}\times%
{\displaystyle \prod_{n=1}^{\infty}}
H^{\alpha}\times \left(  0,1\right)  \times \mathbb{N}$ with the product
topology (giving $\left(  0,1\right)  $ the metric from the homeomorphism with
$\mathbb{R}$). Then $X$ is a complete separable metric space and the set $E$
of elements $\left(  g,g_{1},g_{2},\ldots,\varepsilon,n\right)  $ such that
$\alpha \left(  g-g_{n}\right)  \leq \varepsilon$ is closed in $X.$ Hence $E$ is
a complete separable metric space. Define $\pi:E\rightarrow Y=H^{\alpha}\times%
{\displaystyle \prod_{n=1}^{\infty}}
H^{\alpha}\times \left(  0,1\right)  $ by $\pi \left(  g,g_{1},g_{2}%
,\ldots,\varepsilon,n\right)  =\left(  g,g_{1},g_{2},\ldots,\varepsilon
\right)  .$ It follows from Lemma \ref{arv} that
\[
\pi \left(  E\right)  =\left \{  \left(  g,g_{1},g_{2},\ldots,\varepsilon
\right)  :\exists n\in \mathbb{N}\text{ with }\alpha \left(  g-g_{n}\right)
\leq \varepsilon \right \}
\]
is absolutely measurable and that there is an absolutely measurable
cross-section \newline$\rho:\pi \left(  E\right)  \rightarrow E$ such that
$\pi \left(  \rho \left(  y\right)  \right)  =y$ for every $y\in \pi \left(
E\right)  $. Suppose $\varepsilon>0$. Since $\mu$ is $\sigma$-finite there is
a function $u:\Omega \rightarrow \mathbb{R}$ such that $0<u<1$ and $\beta \left(
u\right)  \leq \varepsilon,$ i.e., if $\Omega$ is a disjoint union of sets
$\left \{  E_{n}\right \}  $ with finite measure, we can let%
\[
u=\varepsilon \sum_{n}\frac{\chi_{E_{n}}}{2^{n}\left(  1+\beta \left(
\chi_{E_{n}}\right)  \right)  }.
\]
We can write $\mathcal{F=}\left \{  f_{1},f_{2},\ldots \right \}  $ and define
$\Gamma:\Omega \rightarrow Y$ by%
\[
\Gamma \left(  \omega \right)  =\left(  h\left(  \omega \right)  ,f_{1}\left(
\omega \right)  ,f_{2}\left(  w\right)  ,\ldots,u\left(  \omega \right)
\right)  .
\]
Since $h\left(  \omega \right)  \in M_{\omega}=\left \{  f_{1}\left(
\omega \right)  ,f_{2}\left(  \omega \right)  ,\ldots \right \}  ^{-\alpha},$ it
follows that $\Gamma \left(  \Omega \right)  \subset \pi \left(  E\right)  $.

Since $\rho$ is absolutely measurable, $\rho \circ \Gamma$ is measurable, and if
we write%
\[
\left(  \rho \circ \Gamma \right)  \left(  \omega \right)  =\left(  \Gamma \left(
\omega \right)  ,n\left(  \omega \right)  \right)  ,
\]
we see that $n:\Omega \rightarrow \mathbb{N}$ is measurable and
\[
\alpha \left(  f_{n\left(  \omega \right)  }-h\left(  \omega \right)  \right)
\leq u\left(  \omega \right)
\]
for every $w\in \Omega.$ Let $G_{k}=\left \{  \omega \in \Omega:n\left(
\omega \right)  =k\right \}  .$ Then $\left \{  G_{k}:k\in \mathbb{N}\right \}  $
is a measurable partition of $\Omega$ and $f=\sum_{k=1}^{\infty}\chi_{G_{k}%
}f_{k}$ defines a measurable function from $\Omega$ to $H^{\alpha}$. Moreover,
if $\omega \in G_{k},$ then%
\[
\alpha \left(  f\left(  \omega \right)  -h\left(  \omega \right)  \right)
=\alpha \left(  f_{n\left(  \omega \right)  }-h\left(  \omega \right)  \right)
\leq u\left(  \omega \right)  .
\]
Hence $\alpha \circ \left(  f-h\right)  \in L^{\beta}\left(  \mu \right)  ,$ so
$f-h\in L^{\beta}\left(  \mu,H^{\alpha}\right)  $ and, by part $\left(
6\right)  $ of Lemma \ref{1},%
\[
\beta \left(  f-h\right)  \leq \beta \left(  u\right)  \leq \varepsilon.
\]
Thus $f=\left(  f-h\right)  +h\in L^{\beta}\left(  \mu,H^{\alpha}\right)  .$
Moreover, since $M$ is an $L^{\infty}\left(  \mu \right)  $-module, we have
$\sum_{k=1}^{N}\chi_{G_{k}}f_{k}\in M$ for each $N\in \mathbb{N}$ and
$f-\sum_{k=1}^{N}\chi_{G_{k}}f_{k}=\chi_{W_{N}}f$, where $W_{N}=\cup
_{k>N}G_{k}$. But
\[
\alpha \left(  \left(  \chi_{W_{N}}f\right)  \left(  \omega \right)  \right)
=\chi_{W_{N}}\left(  \omega \right)  \alpha \left(  f\left(  \omega \right)
\right)  \leq \left(  \alpha \circ f\right)  \left(  \omega \right)  .
\]
Since $\alpha \circ f\in L^{\beta}\left(  \mu \right)  $ and $\chi_{W_{N}%
}\left(  \omega \right)  \alpha \left(  f\left(  \omega \right)  \right)
\rightarrow0$ pointwise, it follows from the general dominated convergence
theorem, part $\left(  7\right)  $ of Lemma \ref{1} that
\[
\beta \left(  f-\sum_{k=1}^{N}\chi_{G_{k}}f_{k}\right)  \rightarrow0.
\]
Hence $f\in M$. But $M$ is closed, $\varepsilon>0$ was arbitrary and
$\beta \left(  f-h\right)  \leq \varepsilon,$ so $h\in M$. This proves the claim.

We next show that $zM_{\omega}\subset M_{\omega}$ for every $\omega \in \Omega$.
Indeed, recalling that $\mathcal{F=}\left \{  f_{1},f_{2},\ldots \right \}  $ and
$S\left(  \mathcal{F}\right)  \subset \mathcal{F}$, we see from the fact that
multiplication by $z$ is an isometry on $H^{\alpha}$ that%
\begin{align*}
zM_{\omega}  & =z\left \{  f_{1}\left(  \omega \right)  ,f_{2}\left(  w\right)
,\ldots \right \}  ^{-\alpha}\\
& =\left \{  zf_{1}\left(  \omega \right)  ,zf_{2}\left(  \omega \right)
,\ldots \right \}  ^{-\alpha}\\
& =\left \{  \left(  Sf_{1}\right)  \left(  \omega \right)  ,\left(
Sf_{2}\right)  \left(  \omega \right)  ,\ldots \right \}  ^{-\alpha}\\
& \subset \left \{  f_{1}\left(  \omega \right)  ,f_{2}\left(  w\right)
,\ldots \right \}  ^{-\alpha}=M_{\omega}.
\end{align*}
It follows from our version of Beurling's theorem \cite[Theorem 7.8]{Chen}
that either each $M_{\omega}=0$ or $M_{\omega}=\varphi H^{\alpha}$ for some
inner function $\varphi \in H^{\infty}$.

Let $\mathcal{I}$ be the set of inner functions in $H^{\infty}$. The algebra
$H^{\infty}$ can be viewed as an algebra of (multiplication) operators on
$H^{2}$, where the weak*-topology corresponds to the weak operator topology.
The set of inner functions is not weak operator closed, but it is closed in
the strong operator topology on $H^{2}$, since the set of inner functions
corresponds exactly to the operators in $H^{\infty}$ that are isometries.
Although the weak and strong operator topologies do not coincide, they
generate the same Borel sets. Hence, the set $\mathcal{I}$ with the strong
operator topology is a complete separable metric space and the Borel sets are
the same as the ones from the weak*-topology.

Let $\mathcal{X}=%
{\displaystyle \prod_{n=1}^{\infty}}
H^{\alpha}\times%
{\displaystyle \prod_{n=1}^{\infty}}
H^{\alpha}\times \mathcal{I}\times%
{\displaystyle \prod_{n=1}^{\infty}}
\mathbb{N}$ with the product topology with the strong operator topology. Let
$\mathcal{E}$ be the set of $\left(  g_{1},g_{2},\ldots,h_{1},h_{2}%
,\ldots,\varphi,n_{1},n_{2},\ldots \right)  $ in $X$ such that $g_{k}=\varphi
h_{k}$ and $\alpha \left(  \varphi-g_{n_{k}}\right)  <1/k$ for $1\leq k<\infty
$. Then $\mathcal{E}$ is a closed subset of the complete separable metric
space $\mathcal{X}.$ Define $\pi:\mathcal{X}\rightarrow \mathcal{Y}=%
{\displaystyle \prod_{n=1}^{\infty}}
H^{\alpha}$ by $\pi \left(  g_{1},g_{2},\ldots,h_{1},h_{2},\ldots,\varphi
,n_{1},n_{2},\ldots \right)  =\left(  g_{1},g_{2},\ldots \right)  $. Then
$\pi \left(  \mathcal{E}\right)  $ is the set of all $\left(  g_{1}%
,g_{2},\ldots \right)  \in \mathcal{Y}$ for which there is an inner function
$\varphi \in \left \{  g_{1},g_{2},\ldots \right \}  ^{-\alpha}$ such that
$\left \{  g_{1},g_{2},\ldots \right \}  \subset \varphi H^{\alpha}$. It follows
from Lemma \ref{arv} that $\pi \left(  \mathcal{E}\right)  $ is absolutely
measurable and that there is an absolutely measurable cross-section $\rho
:\pi \left(  \mathcal{E}\right)  \rightarrow \mathcal{E}$ such that $\pi \left(
\rho \left(  y\right)  \right)  =y$ for every $y\in \pi \left(  \mathcal{E}%
\right)  $.

We know that $M_{\omega}=0$ if and only if $f_{n}\left(  \omega \right)  =0$
for $n\geq1$. Hence $A=\left \{  \omega \in \Omega:M_{\omega}=0\right \}
=\cap_{n=1}^{\infty}f_{n}^{-1}\left(  \left \{  0\right \}  \right)  $ is
measurable. Let $B=\Omega \backslash A.$ If $\omega \in B,$ then there is an
inner function $\varphi$ such that $M_{\omega}=\varphi H^{\alpha}$. Thus if we
define $\Gamma:B\rightarrow \mathcal{Y}$ by $\Gamma \left(  \omega \right)
=\left(  f_{1}\left(  \omega \right)  ,f_{2}\left(  \omega \right)
,\ldots \right)  $, then $\Gamma \left(  B\right)  \subset \pi \left(
\mathcal{E}\right)  .$ Thus $\rho \circ \Gamma:\Omega \rightarrow \mathcal{X}$ is
measurable, and if we write%
\[
\left(  \rho \circ \Gamma \right)  \left(  \omega \right)  =\left(  g_{1\omega
},g_{2\omega},\ldots,h_{1\omega},h_{2\omega},\ldots,\varphi_{\omega
},n_{1\omega},n_{2\omega},\ldots \right)  ,
\]
we see that $\Phi \left(  \omega \right)  =\varphi_{\omega}$ when $\omega \in B$
and $\Phi \left(  \omega \right)  =0$ when $\omega \in A$ defines the desired
function in $L^{\infty}\left(  \mu,H^{\infty}\right)  $.
\end{proof}

\bigskip

In \cite{RTS} their version of Beurling's theorem was given for the space
$H^{2}\left(  \mathbb{T},\ell^{2}\left(  \mathbb{N}\right)  \right)  ,$ which
is easily seen to be isomorphic to $\ell^{2}\left(  \mathbb{N},H^{2}\left(
\mathbb{T}\right)  \right)  ,$ and the latter is covered by our main theorem.
This raises the question of whether $L^{\beta}\left(  \mu,H^{\alpha}\right)  $
is isometrically isomorphic to $H^{\alpha}\left(  \mathbb{T},L^{\beta}\left(
\mu \right)  \right)  $. If $\alpha=\beta=\left(  \left \Vert \cdot \right \Vert
_{2}+\left \Vert \cdot \right \Vert _{4}\right)  /2$ or if $\alpha=\left \Vert
\cdot \right \Vert _{2}$ and $\beta=\left \Vert \cdot \right \Vert _{4}$, then
these spaces are not isometrically isomorphic, i.e., consider
\[
f\left(  x,z\right)  =\left \{
\begin{array}
[c]{cc}%
1-z & x\in E\\
1-2z & x\in \mathbb{T}\backslash E
\end{array}
\right.  ,
\]
where $\mu=m$ and $\Omega=\mathbb{T}$.

Thus when $\alpha=\beta$ is not a $p$-norm or when $\alpha$ and $\beta$ are
different $p$ -norms, the answer is negative. However, the theorem below shows
that when $\alpha=\beta=\left \Vert \cdot \right \Vert _{p}$ for $1\leq p<\infty
$, then the two spaces are the same.

\begin{proposition}
Suppose $1\leq p<\infty$ and $\alpha=\beta=\left \Vert \cdot \right \Vert _{p}$.
Then $L^{\beta}\left(  \mu,H^{\alpha}\left(  \mathbb{T}\right)  \right)  $ and
$H^{\alpha}\left(  \mathbb{T},L^{\beta}\left(  \mu \right)  \right)  $ are
isometrically isometric.
\end{proposition}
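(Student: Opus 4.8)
The plan is to realize both spaces concretely as the same subspace of $L^{p}\left(\mu\times m\right)$, where the passage between the two iterated integrals — i.e. Tonelli's theorem — is exactly the place where the hypothesis $\alpha=\beta=\left\Vert\cdot\right\Vert_{p}$ gets used, and is exactly what fails for the norms in the preceding example. Using the isometric inclusion $H^{p}(\mathbb{T})\subset L^{p}(m)$, a strongly measurable $f:\Omega\rightarrow H^{\alpha}(\mathbb{T})$ corresponds to a jointly measurable function $f\left(\omega,z\right)$ with
\[
\beta\left(f\right)^{p}=\int_{\Omega}\left\Vert f\left(\omega\right)\right\Vert_{H^{p}}^{p}\,d\mu\left(\omega\right)=\int_{\Omega}\int_{\mathbb{T}}\left\vert f\left(\omega,z\right)\right\vert^{p}\,dm\left(z\right)\,d\mu\left(\omega\right),
\]
so that $L^{\beta}\left(\mu,H^{\alpha}\right)$ is identified isometrically with $\{f\in L^{p}\left(\mu\times m\right):f\left(\omega,\cdot\right)\in H^{p}\text{ for a.e. }\omega\}$. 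Similarly, since $L^{\beta}(\mu)=L^{p}(\mu)$, the space $H^{\alpha}(\mathbb{T},L^{\beta}(\mu))$ — the $\alpha$-closure of the $L^{p}(\mu)$-valued analytic polynomials inside $L^{\alpha}\bigl(m,L^{p}(\mu)\bigr)$ — is identified isometrically with the closure in $L^{p}\left(\mu\times m\right)$ of the set $\mathcal{P}$ of functions $\sum_{k=0}^{n}\phi_{k}\left(\omega\right)z^{k}$ with $\phi_{k}\in L^{p}(\mu)$. Hence it suffices to prove that the $L^{p}\left(\mu\times m\right)$-closure of $\mathcal{P}$ equals $\{f:f\left(\omega,\cdot\right)\in H^{p}\text{ a.e.}\}$, after which the identity on $\mathcal{P}$ extends to the desired isometric isomorphism.

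One inclusion is immediate: every element of $\mathcal{P}$ is, for each fixed $\omega$, an analytic polynomial in $z$, so it lies in $\{f:f\left(\omega,\cdot\right)\in H^{p}\text{ a.e.}\}$; and this set is closed in $L^{p}\left(\mu\times m\right)$, because if $f_{n}\rightarrow f$ in $L^{p}\left(\mu\times m\right)$ then, after passing to a subsequence, $f_{n}\left(\omega,\cdot\right)\rightarrow f\left(\omega,\cdot\right)$ in $L^{p}(m)$ for a.e. $\omega$, and $H^{p}$ is closed in $L^{p}(m)$. For the reverse inclusion I would use Fejér means: given $f$ with $f\left(\omega,\cdot\right)\in H^{p}$ a.e., let $\left(\sigma_{n}f\right)\left(\omega\right)=\sigma_{n}\bigl(f\left(\omega\right)\bigr)$ be the $n$-th Cesàro mean of $f\left(\omega\right)\in H^{p}$. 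Its $z^{k}$-coefficient is $\omega\mapsto\left(1-\tfrac{k}{n+1}\right)\widehat{f\left(\omega\right)}\left(k\right)$, which is measurable (the $k$-th Taylor coefficient is a bounded linear functional on $H^{p}$) and lies in $L^{p}(\mu)$, since $\bigl\vert\widehat{f\left(\omega\right)}\left(k\right)\bigr\vert\leq\left\Vert f\left(\omega\right)\right\Vert_{H^{1}}\leq\left\Vert f\left(\omega\right)\right\Vert_{H^{p}}$ ($m$ being a probability measure); thus $\sigma_{n}f\in\mathcal{P}$.

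It then remains to check $\beta\left(\sigma_{n}f-f\right)\rightarrow0$. For each $\omega$ one has $\left\Vert\sigma_{n}\bigl(f\left(\omega\right)\bigr)-f\left(\omega\right)\right\Vert_{H^{p}}\rightarrow0$ by the classical Fejér theorem in $L^{p}(\mathbb{T})$, $1\leq p<\infty$ (the Cesàro means of an $H^{p}$ function are analytic polynomials converging to it in norm), while convolution with the Fejér kernel has norm $\leq1$ on $L^{p}$, giving the domination $\left\Vert\sigma_{n}\bigl(f\left(\omega\right)\bigr)-f\left(\omega\right)\right\Vert_{H^{p}}^{p}\leq2^{p}\left\Vert f\left(\omega\right)\right\Vert_{H^{p}}^{p}\in L^{1}(\mu)$; the dominated convergence theorem (or part $(7)$ of Lemma \ref{1}) then yields $\int_{\Omega}\left\Vert\sigma_{n}\bigl(f\left(\omega\right)\bigr)-f\left(\omega\right)\right\Vert_{H^{p}}^{p}\,d\mu\rightarrow0$, i.e. $\beta\left(\sigma_{n}f-f\right)\rightarrow0$. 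So $\mathcal{P}$ is dense in $\{f:f\left(\omega,\cdot\right)\in H^{p}\text{ a.e.}\}$, both spaces coincide with this subspace of $L^{p}\left(\mu\times m\right)$ under the same norm, and the conclusion follows. I expect the only genuine work to be in the first paragraph — setting up the Tonelli/joint-measurability identification carefully, including that strong measurability of $H^{p}$-valued and of $L^{p}(\mu)$-valued maps matches joint measurability of the two-variable function — which is precisely the step that collapses when $\alpha\neq\beta$ or when the common norm is not a $p$-norm.
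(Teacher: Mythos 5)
Your proof is correct, and its engine is the same as the paper's: Tonelli's theorem applied to the iterated integral of $\left\vert f\right\vert^{p}$ over $\Omega\times\mathbb{T}$, which is exactly where the hypothesis $\alpha=\beta=\left\Vert\cdot\right\Vert_{p}$ enters. The paper's own proof is much shorter: it computes the two norms of a single analytic polynomial $f=a_{0}+a_{1}z+\cdots+a_{n}z^{n}$ with coefficients in $L^{p}\left(\mu\right)$, observes via Fubini that they coincide, and then simply cites density of such polynomials in both spaces (Proposition 6.6 of \cite{Chen} together with density of simple functions). What you do differently is to realize both spaces concretely as the subspace of $L^{p}\left(\mu\times m\right)$ consisting of functions with $H^{p}$ slices, and to prove the relevant density yourself with Fej\'{e}r means; the contractivity of $\sigma_{n}$ on $L^{p}\left(m\right)$, the measurability and $L^{p}\left(\mu\right)$-membership of the Taylor coefficients, and the domination $\left\Vert\sigma_{n}\left(f\left(\omega\right)\right)-f\left(\omega\right)\right\Vert_{p}^{p}\leq2^{p}\left\Vert f\left(\omega\right)\right\Vert_{p}^{p}$ feeding the dominated convergence theorem are all sound. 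This buys a self-contained argument and the extra information that $L^{\beta}\left(\mu,H^{\alpha}\right)$ is precisely $\left\{f\in L^{p}\left(\mu\times m\right):f\left(\omega,\cdot\right)\in H^{p}\text{ a.e.}\right\}$, at the cost of the joint-measurability bookkeeping you flag at the end (strong measurability of a separably-valued map into $L^{p}\left(m\right)$ does match joint measurability, so that step goes through); the paper's version avoids all of this by never leaving the polynomial level.
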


\begin{proof}
Suppose $f=a_{0}+a_{1}z+\cdots+a_{n}z^{n}$ with $a_{0},\ldots,a_{n}\in
L^{\alpha}\left(  \mu \right)  $. We first view $f\in H^{\alpha}\left(
\mathbb{T},X\right)  $. Then we take $\left \vert f\right \vert \left(
z\right)  =\alpha \left(  f\left(  z\right)  \right)  .$ We define
$\beta \left(  f\right)  =\beta \left(  \left \vert f\right \vert \right)  $. We
now consider $f\in L^{\beta}\left(  \mu,H^{\alpha}\left(  \mathbb{T}\right)
\right)  .$ Then $f\left(  \omega \right)  \left(  z\right)  =a_{0}\left(
\omega \right)  +a_{1}\left(  \omega \right)  z+\cdots+a_{n}\left(
\omega \right)  z^{n}$. We then define $\nu:\Omega \rightarrow \lbrack0,\infty)$
by $\nu \left(  \omega \right)  =\alpha \left(  f\left(  \omega \right)  \right)
.$ Then $\beta \left(  f\right)  =\beta \left(  \nu \right)  ,$ and
\[
\alpha \left(  f\right)  ^{p}=\alpha \left(  \beta \left(  f\left(  z\right)
\right)  \right)  ^{p}=\int_{\mathbb{T}}\beta \left(  f\left(  z\right)
\right)  ^{p}dm\left(  z\right)
\]%
\[
=\int_{\mathbb{T}}\left[  \int_{\Omega}\left \vert a_{0}\left(  \omega \right)
+a_{1}\left(  \omega \right)  z+\cdots+a_{n}\left(  \omega \right)
z^{n}\right \vert ^{p}d\mu \left(  \omega \right)  \right]  dm\left(  z\right)
\]%
\[
=\int_{\Omega}\left[  \int_{\mathbb{T}}\left \vert a_{0}\left(  \omega \right)
+a_{1}\left(  \omega \right)  z+\cdots+a_{n}\left(  \omega \right)
z^{n}\right \vert ^{p}dm\left(  z\right)  \right]  d\mu \left(  \omega \right)
\]%
\[
=\int_{\Omega}\nu \left(  \omega \right)  ^{p}d\mu \left(  \omega \right)
=\beta \left(  f\right)  ^{p}.
\]
The functions of the form $f$ above are dense in both $L^{\beta}\left(
\mu,H^{\alpha}\left(  \mathbb{T}\right)  \right)  $ and $H^{\alpha}\left(
\mathbb{T},L^{\beta}\left(  \mu \right)  \right)  $ (see, e.g., Proposition 6.6
in \cite{Chen}); hence, these spaces are isometrically isomorphic.
\end{proof}

\bigskip

\bigskip

\end{document}